\pgfplotsset{compat = 1.11}
\tikzset{>={Latex[width=1.7mm,length=2.2mm]}}
\newtheorem{theorem}{Theorem}
\newtheorem{remark}[theorem]{Remark}
\newcommand{\be}{ \begin{equation}}
\newcommand{\ee}{\end{equation}}
\newcommand{\ben}{ \begin{equation*}}
\newcommand{\een}{\end{equation*}}
\newcommand{\Prob}{\mathbb P}
\newcommand{\Probxy}{\mathbb P_{(x,y)}}
\newcommand{\E}{\mathbb E}
\newcommand{\Exy}{\mathbb E_{(x,y)}}
\newcommand{\qxy}{q_{(x,y)}}
\newcommand{\LX}{L^X}
\newcommand{\LY}{L^Y}
\newcommand{\MX}{M^X}
\newcommand{\MY}{M^Y}
\newcommand{\Z}{\mathbb{Z}}
\newcommand{\Zpos}{\mathbb{Z}_+}
\newcommand{\R}{\mathbb{R}}
\def\1{{\mathchoice {1\mskip-4mu\mathrm l}      % Blackboard bold 1
{1\mskip-4mu\mathrm l}
{1\mskip-4.5mu\mathrm l} {1\mskip-5mu\mathrm l}}}
\newcommand{\nn}{\nonumber}
\title[Stability of two-dimensional Markov processes]{Stability of two-dimensional Markov processes, with an application to QBD processes with an infinite number of phases}
\author{Seva Shneer}\address{School of MACS, Heriot-Watt University, Edinburgh, EH14 4AS, United Kingdom. E-mail: V.Shneer@hw.ac.uk}
\author{Stella Kapodistria}\address{Eindhoven University of Technology, P.O. Box 513, 5600 MB Eindhoven, The Netherlands. E-mail: s.kapodistria@tue.nl}
\begin{document}
\maketitle

\begin{abstract}
In this paper, we derive a simple drift condition for the stability of a class of two-dimensional Markov processes, for which one of the coordinates (also referred to as the {\em phase} for convenience) has a well understood behaviour dependent on the other coordinate (also referred as {\em level}). The first  (phase) component's transitions  may depend on the second component and are only assumed to be eventually independent. The second (level) component  has partially bounded jumps and it is assumed to have a negative drift given that the first one is in its stationary distribution. 

The results presented in this work can be applied to processes of the QBD (quasi- birth-and-death) type on the quarter- and on the half-plane, where the phase and level are inter-dependent. Furthermore,  they provide an off-the-shelf technique to tackle stability issues for a class of two-dimensional Markov processes. These results set the stepping stones towards closing the existing gap in the literature of deriving easily verifiable conditions/criteria for two-dimensional processes with unbounded jumps and interdependence between the two components.
\end{abstract}

\section{Introduction}
In this paper, we derive a simple drift condition for the stability of a class of two-dimensional Markov processes, in which  one of the coordinates (let us refer to it as the first coordinate for convenience or as the {\em phase} coordinate) has a well-understood behaviour, and the behaviour of the other coordinate (let us refer to it as  the second coordinate or as the {\em level} coordinate) is also well-understood conditional on some specific events corresponding to the first component.

The stability condition of similar classes of two-dimensional Markov processes has received considerable attention. In \cite{foss2013stability}, the authors derive a drift condition ensuring the positive recurrence for a class of two-dimensional process in which the first component's transitions are assumed to be independent of the state of the second component and form a Harris recurrent process. The second component is such that it has a negative drift given that the first one is in its stationary distribution. In \cite{adan2020local}, it is assumed that the first component is transient, whereas the second component, conditioned on the first being large enough, has transitions of a positive recurrent Markov process. For this class, the authors derive conditions for the local stability of the second component.

We study processes similar to those of \cite{foss2013stability}. Crucially, we do not assume that the first component's transitions are independent of the state of the second component, we only assume that this happens eventually. Under further technical assumptions, we present drift conditions guaranteeing the stability of the two-dimensional process.

There is a wealth of methods and techniques for deriving stability results for Markov processes. The goal of the paper is to exploit the specific structure of the processes under consideration and using the very general framework of Lyapunov functions to develop off-the-shelf results directly applicable in particular scenarios.

A main application of our results is to QBD-type (quasi- birth-and-death) processes with a countable number of phases defined on the quarter-plane, as well as QBD-type processes on the half-plane (aka double-sided QBD processes), where our results lead to new stability criteria.

For QBD-type processes with a finite number of phases, the classical result of Neuts, that `the average drift of the level process is negative' provides a very simple stability (sufficient and necessary) condition, see \cite[Theorem 1.7.1]{neuts1981matrix}. However, when the number of phases is (countably) infinite, very little is known on how to derive the stability condition. In \cite{tweedie1982operator}, it is  shown that under an additional boundary condition, the classical stability result of Neuts can be extended to the infinite phase case. In \cite{haque2005sufficient, latouche2013level, motyer2006decay, ramaswami1996some, takahashi2001geometric}, the  authors provide sufficient conditions under which the stationary distribution for the level process of a QBD process or a (level-independent) GI/M/1-type Markov chain with countably many phases has a geometric tail. In \cite{kapodistria2017matrix}, by focusing on a class of two-dimensional Markov processes, the authors show how the classical result of Neuts on the stability condition can be adapted in the case of QBD processes with countable phases, in which transitions are only allowed to neighbouring states (viz, the tridiagonal blocks in the generator matrix themselves each have a tridiagonal structure). By exploring the underlying structure of the model, the authors proved the connection between the QBD drift condition derived by Neuts with the known drift condition for nearest neighbour random walks presented in \cite[Theorem 1.2.1]{fayolle1999random}.

%\seva{QBD, their importance, absence of stability results for QBDs with an infinite number of phases}

Our main goal is to establish stability results for QBD-type processes on the quarter-plane and on the half-plane, thus providing a basis for extending other important results to these settings.

As in many results related to positive recurrence of Markov chains and processes, results for discrete-time processes can often by adapted to continuous-time ones. Moreover, results with simple and transparent proofs for simple state spaces may be extended to more general ones, with little additional difficulty in arguments but a considerable additional amount of notation and cumbersome derivations. We therefore choose to present a detailed proof for the simplest scenario of a discrete-time Markov chain on $\Zpos^2$, with an explanation of how this proof, in a standard way, may be generalised to a continuous-time Markov process on $\Zpos^2$. The latter result is immediately applicable to QBD processes with an infinite number of phases, which is one of the main contributions of the paper. We further provide a number of extensions of our main results which do not present any significant technical difficulties and we thus leave them without proofs. One such extension concerns QBD-type processes on the half-plane and we present a specific model where this result immediately leads to a stability criterion.

Our contribution is two-fold: First, our results lead to the stability criteria for QBD processes on the quarter-plane (in other words,  for the case of QBD processes with an infinite number of phases) and on the half-plane. Second, our results provide an off-the-shelf technique to tackle stability issues in a specific class of two-dimensional Markov processes, and we expect them to be applicable to other models of interest.

%\seva{Structure once this is final}
%\tableofcontents

The paper is organised as follows: In Section \ref{sec:main}, the three models under consideration are described in detail: In Section \ref{subsec:discrete}, the results for the discrete time model are presented together with the four assumptions: 1) eventual independence, 2) positive recurrence of the first component, 3) boundedness of jumps, 4) Lyapunov condition. In Section \ref{subsec:continuous}, we proceed with the results for  the continuous time model and in Section \ref{subsec:qbd} we apply the results to QBD processes in the quarter-plane.  In Section \ref{sec: generalisations}, we discuss generalisations of the main results of the previous section. In particular, we assert the stability result for QBD-type processes on the half-plane and apply these results to a new model. In the following subsection we discuss further generalisations.

\subsection{Notation}
We use the following notation throughout. By $\R$ we denote the set of real number, by $\Z$ the set of integers and by $\Zpos$ the set of non-negative integers.

Our results rely on a number of technical assumptions, which are slightly different in the various settings we consider. We number these assumptions and further label them with a letter. We use `d' in the discrete-time setting, `c' in the continuous-time setting and finally `g' in the generalisation to the half-plane.

\section{Assumptions and main results} \label{sec:main}

\subsection{Discrete time} \label{subsec:discrete}

We assume that $\{(X_n,Y_n)\}_{n \ge 0}$ is a time-homogeneous, irreducible aperiodic Markov chain on $\Zpos^2$. We denote by
$\Probxy$ the measure induced by conditioning on the event $\{(X_0,Y_0) = (x,y)\}$ and by $\Exy$ the expectation with respect to this measure. For instance,
$$
\Probxy\left((X_1,Y_1) = (x',y')\right) = \Prob\left ((X_1,Y_1) = (x',y')|(X_0,Y_0) = (x,y)\right)
$$
and
$$
\Exy\left(g(X_1,Y_1)\right) = \E\left ( g(X_1,Y_1)|(X_0,Y_0) = (x,y)\right)
$$
for any function $g: \Zpos^2 \to \R$. We make the following additional assumptions.
\begin{description}
\item[Assumption 1d (eventual independence)] There exists $0 \le y^* < \infty$ such that
$$
\Probxy(X_1 = x') = \Prob^*(x,x'),
$$
which does not depend on $y$ as long as $y \ge y^*$. 
\end{description}

It is clear that the function $\Prob^*$ defined here is such that $\Prob^*(x,x') \ge 0$ for any $x, x' \in \Zpos$ and $\sum_{x'} \Prob^*(x,x') = 1$ for any $x \in \Zpos$.

\begin{description}
\item[Assumption 2d (positive recurrence of the $X$ component)] Introduce a Markov chain $\{X_n^*\}_{n \ge 0}$ via
$$
\Prob(X^*_{n+1} = x'\,|\,X^*_n = x) = \Prob^*(x,x')
$$
for any $n \ge 0$ and for any $x,x' \in \Zpos$. We assume that $\{X_n^*\}_{n \ge 0}$ is positive recurrent and denote its stationary distribution by $\pi^*$.

\item[Assumption 3d (boundedness of jumps)] There exists $0 \le H^Y < \infty$ such that $\Probxy\left ((X_1,Y_1) = (x',y')\right) = 0$ if $y' - y < -H^Y$. There also exists $0 \le H^X < \infty$ such that $\Probxy((X_1,Y_1)=(x',y')) = 0$ if $x'-x > H^X$ and $y < y^*$.
\end{description}

Assumption 3d guarantees that jumps to the right and down are bounded with probability $1$.

\begin{description}

\item[Assumption 4d (Lyapunov condition)] There exist functions $\LY: \Zpos \to \R_+$, $h: \R_+ \to \R_+$ and $f: \Zpos \to \R$ such that $h(k) \downarrow 0$ as $k \to \infty$, $\LY(k) \uparrow \infty$ as $k \to \infty$,
$$
\E f(X^*) = \sum_{k \ge 0} f(k) \pi^*(k) = -\varepsilon < 0,
$$
where $X^* \sim \pi^*$, 
$$
\Exy\left(\LY(Y_1) - \LY(y)\right) \le U
$$
for all $x,y \ge 0$ and for some $U < \infty$, and
$$
\Exy\left(\LY(Y_1) - \LY(y)\right) < f(x) + h(\LY(y))
$$
for all $x \ge 0$ and $y \ge y^*$.

\end{description}

Assumption 4d above is essentially a requirement that the drift of the $Y$ component, when averaged with respect to the stationary distribution of the $X$ component, is negative. As the reader most likely expects, the most common use of this assumption is with $L^Y(y) = y$ for all $y$ and $h(z)=0$ for all $z$, and these are exactly the choices we use in our applications (and in fact even our general result in continuous time assumes that $h(z)=0$ for all $z$). It is however known that linear Lyapunov functions are not always applicable and we thus formulated here a more general result to provide an off-the-shelf technique which may be applied in other situations.

The main result in discrete time is presented in the theorem below.

\begin{theorem} \label{thm:main_discrete}
Under Assumptions 1d-4d above,  $\{(X_n,Y_n)\}_{n \ge 0}$ is positive recurrent.
\end{theorem}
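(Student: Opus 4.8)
The plan is to establish positive recurrence through a Foster--Lyapunov drift argument built on the test function $\LY$, but applied over a time window of length $T$ rather than over a single step. A one-step criterion is hopeless here: since $f(x)$ may be positive for individual $x$, the bound $\Exy(\LY(Y_1)-\LY(y))<f(x)+h(\LY(y))$ of Assumption 4d need not be negative, and only the \emph{average} of $f$ against $\pi^*$ is negative. The structural observation that lets us replace $f$ by its stationary average combines Assumptions 1d and 3d: because down-jumps of the level are bounded by $H^Y$, if we start from a level $y\ge y^*+TH^Y$ then $Y_n\ge y^*$ for every $n\le T$ almost surely; and once $Y_n\ge y^*$ at every step, Assumption 1d forces the phase to evolve under exactly the kernel $\Prob^*$, so that $(X_n)_{n\le T}$ is distributed as the free chain $X_n^*$ started at $x$.

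\textbf{Execution.} First I would telescope the drift over the window and insert the one-step bound of Assumption 4d, which is licensed at each step because $Y_n\ge y^*$ throughout:
\begin{equation*}
\Exy\bigl(\LY(Y_T)-\LY(y)\bigr)
< \E_x\Bigl(\sum_{n=0}^{T-1} f(X_n^*)\Bigr)
+ \Exy\Bigl(\sum_{n=0}^{T-1} h(\LY(Y_n))\Bigr).
\end{equation*}
The second sum is controlled by monotonicity: since $\LY\uparrow\infty$ and $h\downarrow 0$, and since $Y_n\ge y-TH^Y$ on the window, each term is at most $h(\LY(y-TH^Y))$, so for fixed $T$ the whole sum tends to $0$ as $y\to\infty$. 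For the first sum I would invoke Assumption 2d: by the ergodic theorem for the positive recurrent chain $X^*$, $\tfrac1T\,\E_x\sum_{n<T}f(X_n^*)\to \E f(X^*)=-\varepsilon$, so for $T$ large this sum falls below $-\tfrac34\varepsilon T$. Combining the two estimates yields a genuine multi-step negative drift $\Exy(\LY(Y_T)-\LY(y))\le -\tfrac12\varepsilon T$ for all sufficiently large $y$, from which positive recurrence follows by the standard $T$-step (sampled-chain) version of Foster's criterion, the uniform bound $U$ of Assumption 4d guaranteeing that the drift stays finite on the exceptional set.

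\textbf{Main obstacle.} I expect two related points to be the crux, both stemming from the fact that the exceptional set in Foster's criterion must be \emph{finite}, whereas the natural ``good'' region above is $\{y\ge y^*+TH^Y\}$, whose complement is infinite in the phase direction. The first is making the ergodic estimate uniform in the initial phase $x$: the ergodic theorem gives convergence of $\tfrac1T\,\E_x\sum_{n<T}f(X_n^*)$ for each fixed $x$, but not a priori at a rate independent of $x$, and $f$ is permitted to be unbounded. The second is controlling the chain in the low strip $\{y<y^*+TH^Y\}$ when $x$ is large. I would resolve both by leaning on the positive recurrence of $X^*$ a second time: either replace the fixed window $T$ by the (state-dependent) first-passage time of the phase into a finite set and apply the state-dependent drift criterion of Fayolle--Malyshev--Menshikov and Meyn--Tweedie, or---more transparently---augment the test function to $V(x,y)=\LY(y)+G(x)$, where $G$ solves the Poisson equation $\Prob^*G-G=-f-\varepsilon$ for $X^*$. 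Then the $G$-increment cancels $f(x)$ in the one-step drift whenever $y\ge y^*$, leaving drift $<h(\LY(y))-\varepsilon$ \emph{uniformly in} $x$, while the recurrence of $X^*$ handles the low strip. The delicate part of this alternative is the existence and lower-boundedness of the Poisson solution $G$, which is precisely where integrability of $f$ against $\pi^*$ (and possibly along excursions of $X^*$) must be used.
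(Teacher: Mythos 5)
Your central mechanism --- telescoping the drift of $\LY$ over a fixed window, using the bounded down-jumps of Assumption 3d to keep the level above $y^*$ so that the phase runs as $X^*$, and then invoking ergodicity to replace $f$ by its negative stationary mean --- is precisely the key estimate in the paper's proof (its inequality \eqref{eq:aux2}, adapted from Lemma 1 of \cite{foss2013stability}), and you have correctly located the crux: the complement of the ``good'' region is infinite in the phase direction, so a finite exceptional set is not immediate. However, neither of your proposed resolutions closes this. For the Poisson-equation route, even after reducing to bounded $f$ (a reduction the paper makes, see \eqref{eq:aux4}, and which you would also need), the candidate solution $G(x)=\E_x\sum_{n=0}^{\sigma_z-1}\bigl(f(X_n^*)+\varepsilon\bigr)$ is controlled only by $(F+\varepsilon)\E_x\sigma_z$, and $\E_x\sigma_z$ is unbounded in $x$; $G$ can genuinely be unbounded below, in which case $V=\LY+G$ is not bounded below and is not a legitimate Foster function. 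Nothing in Assumptions 1d--4d supplies the extra integrability needed to repair this, so the ``delicate part'' you flag is a real hole, not a technicality. Worse, your claim that ``the recurrence of $X^*$ handles the low strip'' is unjustified exactly where it matters: for $y<y^*$, Assumption 1d says nothing about the phase kernel --- it need not resemble $\Prob^*$ at all, the only control being the bounded right jumps of Assumption 3d --- so positive recurrence of $X^*$ gives no information whatsoever about the chain in that strip.

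The paper avoids both problems without any Poisson equation. It applies the state-dependent drift criterion of \cite{foss2004overview} to the combined function $L(x,y)=B\LX(x)+\LY(y)$, where $\LX\ge 0$ is a Lyapunov function for $X^*$ with drift exactly $-1$ off a finite set $\{x\le\MX\}$ (such a function exists for any positive recurrent chain; no mean-$f$ cancellation is attempted) and $B>U$. The horizon takes only two values: $T=1$ when $x>\MX$, where by \eqref{eq:aux3} the drift $-B$ of $B\LX$ beats the worst-case bound $+U$ on the $\LY$ drift; and $T=N$ when $x\le\MX$ and $y>\MY$, where your ergodic estimate yields $-N\Delta$ for $\LY$ while \eqref{eq:aux1} keeps the growth of $\Exy\LX(X_N)$ of order $o(N)$, so it cannot spoil the negative drift. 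The point you missed is that the ergodic estimate is then needed only uniformly over the finite set $\{x\le\MX\}$, where uniformity is free; large $x$ is handled by the entirely separate mechanism $B\LX$. In this sense your first alternative (a state-dependent horizon plus recurrence of the phase) is much closer to the paper's actual route than the Poisson construction, but it must be paired with the combined Lyapunov function to work. (You did put your finger on a genuine soft spot: the paper's one-step bound for $x>\MX$ is justified via \eqref{eq:aux3} only for $y>y^*$, so the strip $\{x>\MX,\ y< y^*\}$, where only Assumption 3d constrains the dynamics, requires care that neither your sketch nor the paper's own text fully spells out.)
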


Before proving Theorem \ref{thm:main_discrete}, note that if $y^*=0$, then it is equivalent to \cite[Theorem 1]{foss2013stability} for the process on $\Zpos^2$. Our proof also uses some of the arguments from \cite{foss2013stability}. A generalisation to the case of an arbitrary $y^*$ however is crucial to handle the applications that motivate this paper.

\begin{proof}
We use \cite[Theorem 1]{foss2004overview}. In order to establish the positive recurrence of $\{(X_n,Y_n)\}_{n \ge 0}$, it is sufficient to prove that there exist $0 < \MX, \MY, c< \infty$ and measurable functions $L: \Zpos^2 \to \R_+$ and $T: \Zpos^2 \to \Zpos$ such that
\be \label{eq:fk1}
\sup_{x,y} \frac{T(x,y)}{L(x,y)} < \infty,
\ee
\be \label{eq:fk2}
\sup_{x \le \MX, y \le \MY}\Exy\left(L(X_{T(x,y)},Y_{T(x,y)}) - L(x,y)\right) < \infty
\ee
and
\be \label{eq:fk3}
\Exy\left(L(X_{T(x,y)},Y_{T(x,y)}) - L(x,y)\right) \le -c T(x,y),
\ee
if $x > \MX$ or $y > \MY$. In what follows, we shall choose the appropriate constants and functions so that the above conditions are verified. Note that the function $f$ in Assumption 4d may be taken to be bounded, without loss of generality (see a comment after a similar assumption in \cite{foss2013stability}):
\be \label{eq:aux4}
\sup_{x} |f(x)| \le F < \infty.
\ee

As (due to Assumption 2d) $\{X_n^*\}_{n\ge 0}$ is a positive recurrent chain on $\Zpos$, then the set $\{x \le \MX\}$ is positive recurrent for any $0 < \MX < \infty$. Moreover, there exists a Lyapunov function $\LX: \Zpos \to \R_+$ such that
\be \label{eq:aux8}
\sup_{x \le \MX} \E(\LX(X^*_1) - \LX(x)|X^*_0 = x) < \infty,
\ee
$$
\E(\LX(X^*_1) - \LX(x)|X^*_0 = x) =-1
$$
for all $x > \MX$ and
$$
\lim_{n \to \infty} \frac{1}{n}\sup_{x \le \MX} \E (\LX(X^*_n)|X^*_0 = x) = 0
$$
(see, e.g. \cite{foss2013stability} where this is discussed in detail, and the references therein). The above expressions, along with Assumptions 1d and 3d, imply that
\be \label{eq:aux5}
\sup_{x \le \MX, y \in \Z} \Exy(\LX(X_1) - \LX(x)) < \infty,
\ee
(the finiteness of the supremum over $y > y^*$ follows directly from Assumption 1d and \eqref{eq:aux8} above, and the finiteness of the supremum over $y \le y^*$ follows from Assumption 3d),
\be \label{eq:aux3}
\Exy(\LX(X_1) - \LX(x)) =-1
\ee
for any $x > \MX$ and $y > y^*$, and
\be \label{eq:aux1}
\lim_{n \to \infty} \frac{1}{n}\sup_{x \le \MX, y > y^*} \Exy (\LX(X_n)| A_n) = 0,
\ee
where $A_n = \{Y_k > 0 \quad \text{for all} \quad 0 \le k \le n\}$ is the event that the $Y$ component stays away from $0$ for the first $n$ time steps.

The proof of \cite[Lemma 1]{foss2013stability} may be repeated nearly verbatim to show that Assumption 3d implies that for any $\MX$ there exists $n_0$ such that for any $n \ge n_0$ there exists $\MY_0$ and $\Delta>0$ such that
\be \label{eq:aux2}
\Exy (\LY(Y_n) - \LY(y)) \le - n\Delta
\ee
for all $x \le \MX$ and for all $y$ such that $\LY(y) \ge \MY_0$. As $\LY$ is increasing, the last inequality is equivalent to $y \ge \MY$ for some $\MY$.

We now let $L(x,y) = B\LX(x) + \LY(y)$ with $B > U$ from Assumption 4d. We also choose $0 < \MX < \infty$ arbitrarily and let 
$$T(x,y) = 
\begin{cases}
1, \quad \text{if} \quad (x,y) \in M \quad \text{or} \quad x > \MX, \\
N, \quad \text{if} \quad y \ge \MY, x \le \MX,
\end{cases}
$$
where $N$ and $\MY$ are chosen so that
\be \label{eq:aux6}
\Exy(\LX(X_N)|A_N) < \frac{N \Delta}{2B}
\ee
for all $x \le \MX$ and $y > y^*$ (which is possible thanks to \eqref{eq:aux1}) and
\be \label{eq:aux7}
\Exy (\LY(Y_N) - \LY(y)) \le - N\Delta
\ee
for all $x \le \MX$ and $y > \MY$ (which is possible in light of \eqref{eq:aux2}). We can clearly assume also that $\MY > N H^Y + y^*$ with $H^Y$ from Assumption 3d (this, in particular, guarantees that $\MY > y^*$, so \eqref{eq:aux6} above holds for $y > \MY$).

Condition \eqref{eq:fk1} is clearly satisfied as $N$ is a constant. Let us show that conditions \eqref{eq:fk2} and \eqref{eq:fk3} are satisfied. If $(x,y) \in M$, then
$$
\Exy(L(X_1,Y_1) - L(x,y)) = B\sup_{x \le \MX, y \in \Z} \{\Exy(\LX(X_1) - \LX(x))\} + F + \sup_{y} h(\LY(y)) < \infty,
$$
thanks to \eqref{eq:aux4}, \eqref{eq:aux5} and Assumption 4d. Hence \eqref{eq:fk2} holds.

Consider now $x > \MX$. Then
$$
\Exy(L(X_1,Y_1) - L(x,y)) \le -B + U < 0,
$$
thanks to \eqref{eq:aux3} and our choice of $B$. 

It remains to consider $x \le \MX$ and $y > \MY$. As we have chosen $\MY > N H^Y + y^*$, event $A_N$ happens with probability $1$ and hence, thanks to \eqref{eq:aux6} and \eqref{eq:aux7},
$$
\Exy(L(X_N,Y_N) - L(x,y)) \le N \frac{\Delta}{2} - N \Delta = -N \frac{\Delta}{2}.
$$
The two inequalities above imply that \eqref{eq:fk3} holds with $c = \min\{B-U, \Delta/2\}$.
\end{proof}

\subsection{Continuous time} \label{subsec:continuous}

In this section, we present the assumptions and the main result for continuous time  Markov processes which are, not surprisingly, very similar to the ones obtained for discrete time. We also comment on how the discrete-time results can be easily applied to prove stability in continuous time. 

Assume that $\{(X_t,Y_t)\}_{t \ge 0}$ is a time-homogeneous and irreducible Markov process on $\Zpos^2$. We denote by
$q_{(x,y)}(x',y')$ the rate of transition from $(x,y)$ to $(x',y')$ and denote by
$$
q_{(x,y)}(x') = \sum_{y'} q_{(x,y)}(x',y') = \lim_{h \downarrow 0} \frac{\Prob(X(h)=x'|X(0)=x,Y(0)=y)}{h}.
$$

We make the following additional assumptions.
\begin{description}
\item[Assumption 1c (eventual homogeneity in continuous time)] There exists $0 \le y^* < \infty$ such that
$$
\qxy(x') = q^*(x,x')
$$
and does not depend $y$ as long as $y \ge y^*$.

\item[Assumption 2c   (positive recurrence of the $X$ component in continuous time)] Introduce a Markov process $\{X_t^*\}_{t \ge 0}$ such that its transition rates from $x$ to $x'$ are given by $q^*(x,x')$ for any $x,x' \in \Zpos$. We assume that $\{X_t^*\}_{t \ge 0}$ is positive recurrent and denote its stationary distribution by $\pi^*$.

\item[Assumption 3c (boundedness of jumps in continuous time)] There exists $0 < H^Y < \infty$ such that $\qxy(x',y') = 0$ if $y' < y - H^Y$.

There also exists $0 \le H^X < \infty$ such that $\qxy(x',y') = 0$ if $x'-x > H^X$ and $y < y^*$.

\item[ Assumption 4c (Lyapunov condition in continuous time)] There exist functions $\LY: \Zpos \to \R_+$ and $f: \Zpos \to \R$ such that $\LY(k) \uparrow \infty$ as $k \to \infty$,
\be \label{eq:ass4_1}
\E f(X^*) = \sum_{x \ge 0} f(x) \pi^*(x) = -\varepsilon < 0,
\ee
where $X^* \sim \pi^*$, 
\be \label{eq:ass4_2}
\sum_{x',y'} \qxy(x',y') \left(\LY(y') - \LY(y)\right) \le U
\ee
for all $x,y \ge 0$ and for some $U < \infty$, and
\be \label{eq:ass4_3}
\sum_{x',y'} \qxy(x',y') \left(\LY(y') - \LY(y)\right) < f(x)
\ee
for all $x\ge 0$ and $y \ge y^*$.

\item[Assumption 5c (non-explosiveness)]
Let
$$
v(x,y)= \sum_{x',y'} \qxy(x',y').
$$
We assume that transition rates are such that
\be \label{eq:ass5_1}
0 < \inf_{(x,y)} v(x,y) \le \sup_{(x,y)} v(x,y) < \infty.
\ee

The RHS of Assumption 5c is a standard one guaranteeing that the process is non-explosive. The LHS is a technical condition required in our proof.
\end{description}

We note also that Assumption 1c implies that for any $y \ge y^*$
\be \label{eq:ass5_2}
v(x,y) =  \sum_{x'} \sum_{y'} \qxy(x',y') = \sum_{x'} q^*(x,x') = v^*(x)
\ee
does not depend on $y$.

The main result in continuous time is presented in the following theorem.

\begin{theorem} \label{thm:main_continuous}
Under Assumptions 1c-5c, $\{(X_t,Y_t)\}_{t \ge 0}$ is positive recurrent.
\end{theorem}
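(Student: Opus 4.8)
The plan is to reduce the continuous-time statement to the already-proved discrete-time Theorem \ref{thm:main_discrete} by \emph{uniformization}. By the right-hand inequality in \eqref{eq:ass5_1} we may fix a finite constant $\lambda > \sup_{(x,y)} v(x,y)$ and introduce the uniformized chain $\{(\tilde X_n,\tilde Y_n)\}_{n\ge 0}$ on $\Zpos^2$ with one-step transition probabilities
\be
\tilde P_{(x,y)}((x',y')) = \frac{\qxy(x',y')}{\lambda} \ \text{ for } (x',y')\ne(x,y), \qquad \tilde P_{(x,y)}((x,y)) = 1 - \frac{v(x,y)}{\lambda}.
\ee
Since $\lambda$ strictly dominates every total rate, each state carries a strictly positive self-loop, so $\{(\tilde X_n,\tilde Y_n)\}$ is aperiodic; and because its off-diagonal transitions are positive exactly where the corresponding rates are, it inherits irreducibility from $\{(X_t,Y_t)\}$. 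I would then verify that Assumptions 1d--4d hold for this chain and invoke Theorem \ref{thm:main_discrete}.

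Next I would check the four discrete assumptions one at a time. For 1d, summing over $y'$ and using Assumption 1c together with \eqref{eq:ass5_2} shows that, for $y\ge y^*$, $\tilde P_{(x,y)}(\tilde X_1 = x')$ equals $q^*(x,x')/\lambda$ off the diagonal and $1-(v^*(x)-q^*(x,x))/\lambda$ on it, neither of which depends on $y$; call this kernel $\Prob^*$. Crucially, $\Prob^*$ is exactly the uniformization (at rate $\lambda$) of the continuous-time $X^*$-process of Assumption 2c, so by the standard uniformization correspondence it is positive recurrent with the \emph{same} stationary distribution $\pi^*$; this yields Assumption 2d and simultaneously preserves the sign of the averaged drift. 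Assumption 3d is immediate from 3c, since the self-loop has zero displacement and $\tilde P$ vanishes wherever $\qxy$ does. For 4d the self-loop contributes nothing to the increment of $\LY$, whence
\be
\sum_{(x',y')} \tilde P_{(x,y)}((x',y'))\left(\LY(y')-\LY(y)\right) = \frac{1}{\lambda}\sum_{x',y'} \qxy(x',y')\left(\LY(y')-\LY(y)\right),
\ee
which by \eqref{eq:ass4_2} and \eqref{eq:ass4_3} is at most $U/\lambda$ everywhere and strictly below $f(x)/\lambda$ for $y\ge y^*$; taking $\tilde f = f/\lambda$, $\tilde U = U/\lambda$ and $h\equiv 0$ gives $\E_{\pi^*}\tilde f = -\varepsilon/\lambda<0$ and verifies 4d. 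Theorem \ref{thm:main_discrete} then shows that $\{(\tilde X_n,\tilde Y_n)\}$ is positive recurrent.

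Finally I would transfer positive recurrence back to continuous time. Since $\pi Q = 0$ is equivalent to $\pi\tilde P = \pi$, the stationary distribution produced for the uniformized chain is stationary for $\{(X_t,Y_t)\}$, and the right-hand side of \eqref{eq:ass5_1} guarantees non-explosiveness, so this distribution certifies positive recurrence of the process; equivalently, in the uniformized construction each step consumes an independent $\mathrm{Exp}(\lambda)$ amount of real time, so a finite expected return time for the chain gives a finite expected continuous-time return time by Wald's identity. I expect the main obstacle to be precisely this last bridge: one must ensure that the \emph{same} $\pi^*$, and hence the negative averaged drift, survives the passage to discrete time, and that positive recurrence is genuinely equivalent across the two time scales. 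This is exactly where Assumption 5c does its work, with $\sup v<\infty$ supplying non-explosiveness and a well-defined $\lambda$, and $\inf v>0$ furnishing the uniform control of holding times needed for the return-time comparison (and for the alternative embedded-jump-chain route, where the stationary law is reweighted by the rates $v^*$).
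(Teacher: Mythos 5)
Your proof is correct, but it takes a genuinely different route from the paper's. The paper reduces to Theorem \ref{thm:main_discrete} via the embedded \emph{jump chain}, whose transition probabilities are $\qxy(x',y')/v(x,y)$; you reduce via \emph{uniformization} at a constant rate $\lambda > \sup_{(x,y)} v(x,y)$. The difference is material. In the jump-chain route the state-dependent normalisation $v(x,y)$ reweights everything: the stationary distribution of the $X$-marginal becomes $\tilde{\pi}^*(x) \propto \pi^*(x)v^*(x)$, the drift function becomes $f(x)/v^*(x)$, and the negativity of the averaged drift survives only because the two reweightings cancel, since $\sum_x \bigl(f(x)/v^*(x)\bigr)\pi^*(x)v^*(x) \propto \sum_x f(x)\pi^*(x) < 0$; moreover the lower bound $\inf_{(x,y)} v(x,y) > 0$ of Assumption 5c is genuinely needed there to keep the upward drift bound $U/v(x,y)$ uniformly finite. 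In your route the normalisation is the constant $\lambda$, so the $X$-marginal kernel for $y \ge y^*$ is exactly the uniformization of the $X^*$-process and has the \emph{same} stationary distribution $\pi^*$, all drift bounds simply scale by $1/\lambda$, and aperiodicity --- which Theorem \ref{thm:main_discrete} formally requires and which is not self-evident for a jump chain --- comes for free from the strictly positive self-loops. A pleasant by-product is that your argument never uses the left-hand inequality of \eqref{eq:ass5_1}: only $\sup_{(x,y)} v(x,y) < \infty$ is needed (so that $\lambda$ exists and the process is non-explosive), meaning uniformization proves the theorem under a slightly weaker hypothesis than the paper's proof. Your transfer back to continuous time is also sound: $\pi \tilde{P} = \pi$ is equivalent to $\pi Q = 0$, and an irreducible, non-explosive process with an invariant distribution is positive recurrent. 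The one inaccuracy is your closing remark that $\inf_{(x,y)} v(x,y) > 0$ supplies the ``uniform control of holding times'' needed for the return-time comparison: in the uniformized construction the holding times are i.i.d.\ $\mathrm{Exp}(\lambda)$ regardless of the rates, so that condition plays no role in your proof; it is only the paper's jump-chain route (which you mention as an alternative) that actually requires it.
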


\begin{proof}
We only comment on the proof as it is straightforward and follows a standard approach of showing that the jump chain of the Markov process is positive recurrent (for a recent nice presentation of these ideas see, e.g., \cite[Appendix D]{kelly2014stochastic}). Consider the jump chain $\{(X^J_n,Y^J_n)\}_{n \ge 0}$ of the process $\{(X_t,Y_t)\}_{t \ge 0}$, which consists of the values of the process observed at points when a jump occurs. It is clear that the transition probabilities for the jump chain are given by
$$
\Prob\left((X^J_{n+1},Y^J_{n+1}) = (x',y')|(X^J_n,Y^J_n) = (x,y)\right) = \frac{\qxy(x',y')}{v(x,y)},
$$
and it is easy to see that Assumptions 1c-3c imply that Assumptions 1d-3d for the discrete time model hold. Note that
$$
p^*(x,x') = \frac{q^*(x,x')}{v(x)}
$$
and the stationary distribution of the jump chain for $X^*$, say $\tilde{\pi}^*$, is given by
$$
\tilde{\pi}^*(x) = \frac{\pi^*(x) v^*(x)}{\sum_{z} \pi^*(z) v^*(z)}.
$$
Assume now Assumption 4c holds with functions $L^Y$ and $f$. For the jump chain, 
$$
\Exy(L^Y(Y_1) - L^Y(y)) = \sum_{x',y'} \frac{\qxy(x',y')}{v(x,y)} (L^Y(y') - L^Y(y))  \le \frac{U}{v(x,y)} \le \tilde{U}
$$
for some $\tilde{U} < \infty$ thanks to Assumptions 4c and 5c.
Further, for all $y \ge y^*$,
$$
\Exy(L^Y(Y_1) - L^Y(y)) = \sum_{x',y'} \frac{\qxy(x',y')}{v(x)} (L^Y(y') - L^Y(y)) < \frac{f(x)}{v(x)} = \tilde{f}(x),
$$
say. Then
$$
\sum_x \tilde{f}(x) \tilde{\pi}^*(x) = \frac{\sum_x f(x) \pi^*(x)}{\sum_{z} \pi^*(z) v^*(z)} < 0,
$$
thanks to Assumption 4c and hence Assumption 4d holds for the jump chain with function $L^Y$, $\tilde{f}$ and $h(z)=0$ for all $z$.

Theorem \ref{thm:main_discrete} then implies the positive recurrence of the jump chain which, in turn, implies the positive recurrence of the Markov process.
\end{proof}

\subsection{Application: QBD processes in the quarter-plane} \label{subsec:qbd}

Let us consider a so-called level-dependent QBD process on $\Zpos^2$ (with the first coordinate representing the phase and the second coordinate representing the level) so that
\be \label{eq:def_qbd1}
q_{(i,0)}(i',0) = B_0 (i,i')
\ee
\begin{align} 
q_{(i,k)}(i',k+1) &= A_1(i,i'),\label{eq:def_qbd2}\\
q_{(i,k)}(i',k) &= A_0(i,i'),\label{eq:def_qbd3}
\end{align}
for $k \ge 0$, and
\be \label{eq:def_qbd4}
q_{(i,k)}(i',k-1) = A_{-1}(i,i'),
\ee
for $k \ge 1$, with some (infinite in general) matrices $A_1, A_0, A_{-1}$ and $B_0$. Diagonal entries of $A_0$ and $B_0$ are such that the matrix has row sums equal to $0$ (see, e.g. \cite{latouche2003drift}).

\begin{figure}[h!]
\begin{center}
\begin{tikzpicture}[font = \scriptsize]

% Define size of the grid
\def \mmax {8.5}
\def \nmax {5.5}

% Constructing axes
\draw[black!40] (0,0) -- (\mmax,0);
\draw[black!40] (0,0) -- (0,\nmax);

% Constructing labels on the axes
\node[right] (m_label) at (\mmax,0) {phase};
\node[right] at (\mmax-0.7,-0.5) {$\cdots$};
\node[right] at (\mmax-3.2,-0.5) {$i+1\ \cdots\ i+H^X$};
\node[right] at (\mmax-3.7,-0.5) {$i$};
\node[right] at (\mmax-4.8,-0.5) {$i-1$};
\node[right] at (\mmax-5.8,-0.5) {$i-2$};
\node[right] at (0,-0.5) {$0$};

\node[above] (n_label) at (0,\nmax) {level};
\node[above] at (-0.5,\nmax-1) {$k+1$};
\node[above] at (-0.5,\nmax-2) {$k$};
\node[above] at (-0.5,\nmax-3) {$k-1$};
\node[above] at (-0.5,0) {$0$};
\node[above] at (-0.5,1) {$1$};

% Drawing the nodes
\foreach \mm in {0,3,4,5,6,7.2}{
    \foreach \nn in {0,1,3,4,5}{
    \node[draw, circle, minimum size = 2, fill = black, inner sep = 0] (\mm-\nn) at (\mm,\nn) {};
    }
}

 \foreach \nn in {0,1,3,4,5}{
    \node at (1.5,\nn) {$\cdots$};
    \node at (6.5,\nn) {$\cdots$};
    \node at (\mmax-0.4,\nn) {$\cdots$};
    }
 \foreach \mm in {0,3,4,5,6,7.2}{
    \node at (\mm,2) {$\vdots$};
    }

% Drawing the arrows for (5,3)
\draw[-latex] (5,4) to [out=35,in=190]  (7.2,5) ;
\draw[-latex] (5,4) to [out=20,in=160]  (7.2,4) ;
\draw[-latex] (5,4) to   (6,5) ;
\draw[-latex] (5,4) to  (6,4) ;
\draw[-latex] (5,4) -- (5,5) node[pos = 1,above] { };
\draw[-latex] (5,4) -- (4,5) node[pos = 0.7,left] { };
\draw[-latex] (5,4) -- (4,4) node[pos = 0.7,above] { };
\draw[-latex] (5,4) to [out=145,in=0]  (3,5) ;
\draw[-latex] (5,4) to [out=165,in=20]   (3,4) ;
\draw[-latex] (5,4)  to [out=160,in=20] (0,4);
\draw[-latex] (5,4)  to [out=160,in=0] (0,5);

\draw[-latex] (5,4) to [out=-35,in=190]  (7.2,3) ;
\draw[-latex] (5,4) to   (6,3) ;
\draw[-latex] (5,4) -- (5,3) node[pos = 1,above] { };
\draw[-latex] (5,4) -- (4,3) node[pos = 0.7,left] { };
\draw[-latex] (5,4) to [out=-145,in=0]  (3,3) ;
\draw[-latex] (5,4)  to [out=-160,in=0] (0,3);

% Drawing the arrows for (5,0)
\draw[-latex] (5,0) to [out=35,in=190]  (7.2,1) ;
\draw[-latex] (5,0) to [out=20,in=160]  (7.2,0) ;
\draw[-latex] (5,0) to   (6,1) ;
\draw[-latex] (5,0) to  (6,0) ;
\draw[-latex] (5,0) -- (5,1) node[pos = 1,above] { };
\draw[-latex] (5,0) -- (4,1) node[pos = 0.7,left] { };
\draw[-latex] (5,0) -- (4,0) node[pos = 0.7,above] { };
\draw[-latex] (5,0) to [out=145,in=0]  (3,1) ;
\draw[-latex] (5,0) to [out=165,in=20]   (3,0) ;
\draw[-latex] (5,0)  to [out=160,in=20] (0,0);
\draw[-latex] (5,0)  to [out=160,in=0] (0,1);
\end{tikzpicture} 
\end{center}
\caption{Schematic overview of the transition rate diagram of the level-dependent QBD process on $\Zpos^2$ \label{fig:QBDfigure}}
\end{figure}
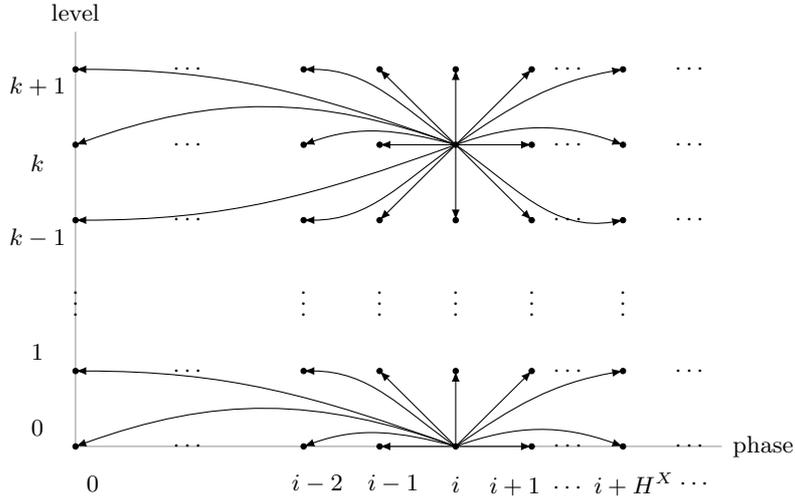
\begin{remark}
One can readily see that, for $H^X=1$, the stochastic process depicted in  Figure \ref{fig:QBDfigure} can be interpreted as a (level-independent) GI/M/1-type Markov chain by considering the first coordinate to be the level and the second coordinate to be the phase. This will lead to a structure similar to the one observed in \cite{ramaswami1996some, latouche2013level}. As both the level and the phase are infinite, we have opted for the simpler notation  inherited to the QBD structure rather than the more intricate notation inherited to the GI/G/1-type structure. As such, we opted to refer to the first coordinate as the phase and refer to the second coordinate as the level.
\end{remark}

Note again that the standard assumption is that the number of phases is bounded, i.e. $i,i' \le m$ for some $m<\infty$. {\it We do not make such assumption.} We only need to assume that the transitions to the right in the phase are bounded by $H^X$ say, i.e.
$$
A_{-1}(i,i') = A_0(i,i') = A_1(i,i') =  B_0(i,i') = 0 \quad \text{if} \quad i-i'>H^X
$$
(note we allow unbounded jumps to the left).

One can then see that if we take $X_t$ to represent the phase and $Y_t$ to represent the level, then Assumption 1c is automatically satisfied with $y^*=1$. Assumption 3c is satisfied with $H^Y=1$ and $H^X$ defined above.

In the QBD notation above,
$$
q^*(i,i') = A_{-1}(i,i') + A_0(i,i') + A_1(i,i') 
$$
and hence the stationary distribution $\pi^*$ from Assumption 2c, if it exists, satisfies the equation
$$
\pi^*(A_{-1} + A_0 + A_1 ) = 0
$$
(note this distribution always exists if the number of phases is bounded, but in general one needs to check this, hence Assumption 2c still needs to be accounted for). Consider now Assumption 4c. Take $\LY(y)=y$, then \eqref{eq:ass4_2} is satisfied automatically. With
$$
f(i) = \sum_{i'} (A_1(i,i') - A_{-1}(i,i'))
$$
relation \eqref{eq:ass4_3} is also satisfied automatically, and one only needs to check whether or not \eqref{eq:ass4_1} holds. This is equivalent to
$$
\pi^*(A_1-A_{-1})\mathbf{1} < 0,
$$
where $\mathbf{1}$ is an identity vector. 

We now need to make sure that Assumption 5c holds.
In the QBD notation,
$$
v(i,k) = \begin{cases}
\sum_{i'} (A_{-1}(i,i') + A_0(i,i') + A_1(i,i')), \quad k \ge 1,\\
\sum_{i'} A_1(i,i') + B_0(i,i'), \quad k=0,
\end{cases}
$$
and we need to assume that the infimum of this function is positive and its supremum is finite (this of course holds automatically if the number of phases is finite).

To summarise, our results can be directly applied to the QBD processes defined in 
Equations \eqref{eq:def_qbd1}--\eqref{eq:def_qbd4}
and they are presented in the following theorem.

\begin{theorem} \label{thm:qbd}
Consider a QBD process with transitions as defined in Equations \eqref{eq:def_qbd1} -- \eqref{eq:def_qbd4} and assume that there exists $0 \le H^X < \infty$ such that
$$
A_{-1}(i,i') = A_0(i,i') = A_1(i,i') =B_0(i,i') = 0 \quad \text{if} \quad  i'-i > H^X.
$$
Assume also that
$$
0< \inf_{i,k} v(i,k) \le \sup_{i,k} v(i,k) < \infty
$$
for the function $v$ defined above. Assume further that there exists $\pi^*$ that satisfies
$$
\pi^*(A_{-1} + A_0 + A_1) = 0.
$$
If
$$
\pi^*(A_1-A_{-1})\mathbf{1} < 0,
$$
then the QBD process is positive recurrent.
\end{theorem}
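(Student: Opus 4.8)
The plan is to obtain Theorem \ref{thm:qbd} as an immediate corollary of Theorem \ref{thm:main_continuous}: I would identify $X_t$ with the phase and $Y_t$ with the level, and verify that the QBD dynamics of \eqref{eq:def_qbd1}--\eqref{eq:def_qbd4} satisfy Assumptions 1c--5c. Most of this translation is already carried out in the discussion preceding the theorem, so the proof mainly consists of assembling those observations and invoking Theorem \ref{thm:main_continuous}.

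First I would dispatch the structural assumptions. Because the rates out of any state with level $k \ge 1$ are governed by $A_1, A_0, A_{-1}$ and do not depend on the exact value of $k$, Assumption 1c holds with $y^* = 1$ and $q^*(i,i') = A_{-1}(i,i') + A_0(i,i') + A_1(i,i')$. Since the level changes by at most one per jump, Assumption 3c holds with $H^Y = 1$, together with the hypothesised bound $H^X$ on rightward phase jumps. Assumption 5c is literally the standing hypothesis $0 < \inf_{i,k} v(i,k) \le \sup_{i,k} v(i,k) < \infty$, and I would also record that for $k \ge 1$ the total rate is level-independent, matching \eqref{eq:ass5_2}.

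Next I would verify the probabilistic assumptions. For Assumption 2c, the process $X_t^*$ generated by $q^*$ is irreducible, and the hypothesis that the probability vector $\pi^*$ solves $\pi^*(A_{-1}+A_0+A_1) = 0$ then exhibits a normalizable stationary distribution, which for an irreducible chain on a countable state space (with rates bounded by $\sup_{i,k} v(i,k) < \infty$, hence non-explosive) forces positive recurrence. For Assumption 4c, I would take $\LY(y) = y$, so that the level drift from any state with $k \ge 1$ equals $f(i) := \sum_{i'}(A_1(i,i') - A_{-1}(i,i'))$, while from the boundary $k = 0$ it equals $\sum_{i'} A_1(i,i') \le \sup_{i,k} v(i,k) < \infty$; this yields the uniform bound \eqref{eq:ass4_2}. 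Finally $\E f(X^*) = \pi^*(A_1 - A_{-1})\mathbf{1}$, which is strictly negative by the stability hypothesis, giving \eqref{eq:ass4_1}.

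The one step deserving genuine care — and the only place where the argument exceeds pure bookkeeping — is the strict inequality \eqref{eq:ass4_3}: with $\LY(y)=y$ the level drift for $k \ge 1$ is exactly $f(i)$, not strictly below it. I would absorb this by replacing $f$ with $f + \delta$ for a small $\delta > 0$; the slack in $\E f(X^*) = -\varepsilon < 0$ lets me keep $\E(f+\delta)(X^*) < 0$ while making the drift strictly less than $f + \delta$, and since $|f(i)| \le v(i,k) \le \sup_{i,k} v(i,k) < \infty$ the perturbed $f$ remains bounded, so the reduction of Assumption 4c to 4d inside the proof of Theorem \ref{thm:main_continuous} goes through unchanged. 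With all five assumptions in force, Theorem \ref{thm:main_continuous} delivers the positive recurrence of the QBD process.
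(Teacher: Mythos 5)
Your proposal is correct and takes essentially the same route as the paper: identify the phase with $X$ and the level with $Y$, verify Assumptions 1c--5c with $y^*=1$, $H^Y=1$, $\LY(y)=y$ and $f(i)=\sum_{i'}\bigl(A_1(i,i')-A_{-1}(i,i')\bigr)$, then invoke Theorem \ref{thm:main_continuous}. You are in fact slightly more careful than the paper, which claims \eqref{eq:ass4_3} is ``satisfied automatically'' even though with $\LY(y)=y$ the level drift equals $f(i)$ exactly rather than being strictly smaller; your perturbation $f+\delta$ with $0<\delta<\varepsilon$ cleanly repairs this harmless imprecision.
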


\begin{remark}
One can readily see that our results may be applied to a more general process where jumps of a size larger than $1$ (but bounded) are allowed for levels (i.e. where along with matrices $A_{-1}$, $A_0$ and $A_1$ for transitions to one level below, same level and one level above, respectively, one can introduce matrices $A_m$, $-H^Y \le m \le H^Y$ for transitions that include a jump of size $m$ in level).
\end{remark}

\section{Generalisations} \label{sec: generalisations}

In this section, we discuss generalisations of our main results which do not require new ideas in their proofs. We therefore leave them without proofs. We start with a generalisation that immediately leads to a stability result for QBD-type processes on a half-line. We present these results for a particular case of a new model, and we discuss this in detail. In the following subsection, we discuss further generalisations.

\subsection{Processes on the half-plane} \label{subsec:binomial}

We start by presenting a simple generalisation of our results to a time-homogeneous and irreducible Markov process on $\Zpos \times Z$ whose transitions in the first component do not depend on the position of the second component as long as the latter is either positive or negative (but may be different in the two cases). More precisely, we make the following assumptions which are essentially our assumptions in continuous time for both the positive and negative values of the $Y$ component.
\begin{description}
\item[Assumption 1g]
$$
\sum_{y'} \qxy(x',y') = q_+^*(x,x'),
$$
which does not depend $y$ as long as $y > 0$ and
$$
\sum_{y'} \qxy(x',y') = q_-^*(x,x'),
$$
which does not depend $y$ as long as $y < 0$.

\item[Assumption 2g] Introduce Markov processes $\{X_{+,t}^*\}_{t \ge 0}$ $\{X_{-,t}^*\}_{t \ge 0}$ such that their transition rates from $x$ to $x'$ are given by $q_+^*(x,x')$ and $q_-^*(x,x')$. We assume that both these processes are positive recurrent and denote their stationary distributions by $\pi_+^*$ and $\pi_-^*$, respectively.

\item[Assumption 3g] There exists $0 < H^Y < \infty$ such that $\qxy(x',y') = 0$ if $y > 0$ and $y' < y - H^Y$ or $y < 0$ and $y' > y + H^Y$.

There also exists $0 \le H^X < \infty$ such that $\qxy(x',y') = 0$ if $x'-x > H^X$ and $y =0$.

\item[Assumption 4g] There exist functions $\LY_-, L^Y_+: \Zpos \to \R_+$ and $f_+,f_-: \Zpos \to \R$ such that $\LY_{\pm}(k) \uparrow \infty$ as $k \to \infty$,
\be \label{eq:ass4_1e}
\E f_{\pm}(X_{\pm}^*) = \sum_{x \ge 0} f_{\pm}(x) \pi_{\pm}^*(x) < 0,
\ee
where $X_{\pm}^* \sim \pi_{\pm}^*$, 
\be \label{eq:ass4_2e}
\sum_{x',y'} \qxy(x',y') \left(\LY_{\pm}(y') - \LY_{\pm}(y)\right) \le U
\ee
for all $x,y \ge 0$ and for some $U < \infty$, and
\be \label{eq:ass4_3e}
\sum_{x',y'} \qxy(x',y') \left(\LY_{\pm}(y') - \LY_{\pm}(y)\right) < 
f_{\pm}(x),
\ee
where index $+$ is chosen if $y > 0$ and index $-$ is chosen if $y < 0$.

\item[Assumption 5g] Transition rates are such that Assumption 5c holds for both positive and negative $y$.
\end{description}
It is easy to see that if Assumptions 1g-5g hold, the proof of Theorem \ref{thm:main_continuous} may be applied in a straightforward manner (with an appropriate analogue of Theorem \ref{thm:main_discrete} for Markov chains on $\Zpos \times \Z$) to show positive recurrence of the process described above.

The result above presents a method for demonstrating the stability of QBD-type processes on a half-plane. We proceed  by  applying it to a particular example, namely a two-dimensional variation of the queueing system with binomial catastrophes studied in \cite{kapodistria2011m}. More concretely, assume there are two queues, say $Q_i$,  $i=1,2$, with arrival rates $\lambda_i$ and service rates $\mu_i$, $i=1,2$, respectively. There are also coupled abandonments (or binomial catastrophes) that happen at rate $\gamma$. That is, if at the time of a catastrophe there are $q_1$ customers in queue $1$ and $q_2$ customers in queue $2$, then a random number $N \sim Bin(q,p)$ leaves from both queues, where $q=\min\{q_1,q_2\}$ and $0 < p \le 1$ is a parameter of the system.

The stability question is trivial for the corresponding one-dimensional system as any of the two queues would be stable if it were in isolation. Denote by $\pi^{(1)}$ and $\pi^{(2)}$ stationary distributions of queues $1$ and $2$, respectively, in isolation.

With notation $Q_1(t)$ and $Q_2(t)$ for the states of the two queues at time $t$, $t\ge 0$, consider 
$$\left(X(t), Y(t)\right) = \left(\min\{Q_1(t),Q_2(t)\}, Q_1(t) - Q_2(t)\right),$$ which is a Markov process, and it is positive recurrent if and only if the original process is positive recurrent. One can also see that, as long as $Y(t) > 0$, the transitions of $X(t)$ are exactly equal to the transitions of queue $2$ in isolation (and have $\pi^{(2)}$ as stationary distribution). Similarly, as long as $Y(t) < 0$, the transitions of $X(t)$ are exactly equal to the transitions of queue $1$ in isolation (and have $\pi^{(1)}$ as stationary distribution).

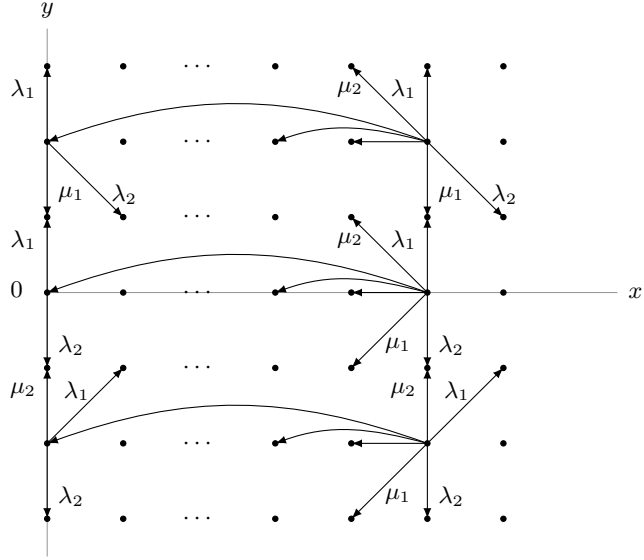
\begin{figure}[h!]
\begin{center}
\begin{tikzpicture}[font = \scriptsize]

% Define size of the grid
\def \mmax {7.5}
\def \nmax {6.5}

% Constructing axes
\draw[black!40] (0,3) -- (\mmax,3);
\draw[black!40] (0,-0.5) -- (0,\nmax);

% Constructing labels on the axes
\node[right] (m_label) at (\mmax,3) {$x$};

\node[above] (n_label) at (0,\nmax) {$y$};
\node[above] at (-0.4,2.8) {$0$};

% Drawing the nodes
\foreach \mm in {0,,3,4,5,6}{
    \foreach \nn in {0,1,2,3,4,5,6}{
    \node[draw, circle, minimum size = 2, fill = black, inner sep = 0] (\mm-\nn) at (\mm,\nn) {};
    }
}

 \foreach \nn in {0,1,2,3,4,5,6}{
    \node at (2,\nn) {$\cdots$};
    }

% Drawing the arrows for (5,5)
\draw[-latex] (5,5) -- (5,6) node[pos = 0.7,left] { $\lambda_1$};
\draw[-latex] (5,5) -- (4,6) node[pos = 0.7,left] {$\mu_2$};
\draw[-latex] (5,5) -- (4,5) node[pos = 0.7,below] {};
\draw[-latex] (5,5) to [out=165,in=20]   (3,5) ;
\draw[-latex] (5,5)  to [out=160,in=20] (0,5);
\draw[-latex] (5,5) -- (6,4) node[pos = 0.7,right] {$\lambda_2$};
\draw[-latex] (5,5) -- (5,4) node[pos = 0.7,right] {$\mu_1$};
\draw[-latex] (0,5) -- (0,6) node[pos = 0.7,left] { $\lambda_1$};
\draw[-latex] (0,5) -- (1,4) node[pos = 0.7,right] {$\lambda_2$};
\draw[-latex] (0,5) -- (0,4) node[pos = 0.7,right] {$\mu_1$};

% Drawing the arrows for (5,3)
\draw[-latex] (5,3) -- (5,4) node[pos = 0.7,left] {$\lambda_1$};
\draw[-latex] (5,3) -- (4,4) node[pos = 0.7,left] {$\mu_2$};
\draw[-latex] (5,3) -- (4,3) node[pos = 0.7,above] { };
\draw[-latex] (5,3) to [out=165,in=20]   (3,3);
\draw[-latex] (5,3)  to [out=160,in=20] (0,3);
\draw[-latex] (5,3) -- (4,2) node[pos = 0.7,right] {$\mu_1$};
\draw[-latex] (5,3) -- (5,2) node[pos = 0.7,right] {$\lambda_2$};
\draw[-latex] (0,3) -- (0,4) node[pos = 0.7,left] {$\lambda_1$};
\draw[-latex] (0,3) -- (0,2) node[pos = 0.7,right] {$\lambda_2$};

% Drawing the arrows for (5,1)
\draw[-latex] (5,1) -- (5,2) node[pos = 0.7,left] {$\mu_2$};
\draw[-latex] (5,1) -- (4,0) node[pos = 0.7,right] {$\mu_1$};
\draw[-latex] (5,1) -- (4,1) node[pos = 0.7,above] {};
\draw[-latex] (5,1) to [out=165,in=20]   (3,1) ;
\draw[-latex] (5,1)  to [out=160,in=20] (0,1);
\draw[-latex] (5,1) -- (6,2) node[pos = 0.7,left] {$\lambda_1$};
\draw[-latex] (5,1) -- (5,0) node[pos = 0.7,right] {$\lambda_2$};
\draw[-latex] (0,1) -- (0,2) node[pos = 0.7,left] {$\mu_2$};
\draw[-latex] (0,1) -- (1,2) node[pos = 0.7,left] {$\lambda_1$};
\draw[-latex] (0,1) -- (0,0) node[pos = 0.7,right] {$\lambda_2$};

\end{tikzpicture} 
\end{center}
\caption{Schematic overview of the transition rate diagram of the two-dimensional binomial catastrophes model  on the half-plane}
\end{figure}

It is easy to see that Assumptions 1g-3g and 5g above hold with obvious constants and transitions, and the stability condition will be driven by Assumption 4g. With $L_{\pm}^Y(y) = \pm y$
$$
\sum_{x',y'} \qxy(x',y') \left(\LY(y') - \LY(y)\right) = 
\begin{cases}
(\lambda_1 - \mu_1) - (\lambda_2 - \mu_2), \quad x>0, y>0\\
(\lambda_2 - \mu_2) - (\lambda_1 - \mu_1), \quad x>0, y<0,\\
(\lambda_1 - \mu_1) - \lambda_2, \quad y > 0, x=0,\\
(\lambda_2 - \mu_2) - \lambda_1, \quad y < 0, x=0,\\
%\lambda_1-\lambda_2,  \quad y = 0, x=0.
\end{cases}
$$
and hence Equation \eqref{eq:ass4_3e} is satisfied with functions
$$
f_+(x) = 
\begin{cases}
(\lambda_1 - \mu_1) - (\lambda_2 - \mu_2), \quad x>0,\\
(\lambda_1 - \mu_1) - \lambda_2, \quad x=0,
\end{cases}
$$
$$
f_-(x) = 
\begin{cases}
(\lambda_1 - \mu_1) - (\lambda_2 - \mu_2), \quad x>0,\\
\lambda_1 - (\lambda_2 - \mu_2), \quad x=0.
\end{cases}
$$
Stability is then ensured, due to \eqref{eq:ass4_1e} as long as
$$
((\lambda_1 - \mu_1) - (\lambda_2 - \mu_2))(1-\pi^{(2)}(0)) + ((\lambda_1 - \mu_1) - \lambda_2) \pi^{(2)}(0) < 0
$$
and
$$
((\lambda_2 - \mu_2) - (\lambda_1 - \mu_1))(1-\pi^{(1)}(0)) + ((\lambda_2 - \mu_2) - \lambda_1)\pi^{(1)}(0) < 0
$$
or, equivalently,
$$
\lambda_1 - \mu_1 < \lambda_2 - \mu_2(1-\pi^{(2)}(0))
$$
and
$$
\lambda_2 - \mu_2 < \lambda_1 - \mu_1(1-\pi^{(1)}(0)).
$$
Note that, interestingly, if $\lambda_1 - \mu_1 = \lambda_2 - \mu_2$, the system is always stable.
\subsection{Further generalisations} \label{subsec:further_gen}

The main goal of this paper has been to provide a method to determine stability for QBD-type processes on the quarter-plane and on the half-plane. To this end, we have chosen to present our general results for exactly such state spaces. One can see however that, for instance, Theorem \ref{thm:main_discrete} may be proven for a much wider class of processes on general normed vector spaces in a way similar to conditions (A) and (B) in \cite{foss2013stability}. As mentioned above, these results do not present any technical difficulties but require a rather complex notation and further cumbersome derivations. We leave these generalisations to the reader. 

\section*{Acknowledgements}
The authors thank  Zbigniew Palmowski (Department of Applied Mathematics, Faculty of Pure and Applied Mathematics, Wrocław University of Science and Technology) and Peter G. Taylor (School of Mathematics and Statistics,  University of Melbourne) for discussions on queueing theory applications and further generalisations of this work.

The work of Stella Kapodistria is supported by the Netherlands Organisation for Scientific Research (NWO) through the Gravitation-grant NETWORKS-024.002.003.

Seva Shneer thanks Eindhoven University of Technology for hospitality during a number of his recent visits when a part of this work was conducted.

\bibliographystyle{plain}
\bibliography{stab_bib}

\end{document}